\theoremstyle{definition}
\newtheorem{theorem}{Theorem}[section]
\newtheorem{lemma}[theorem]{Lemma}
\newtheorem{proposition}[theorem]{Proposition}
\newtheorem{corollary}[theorem]{Corollary}
\theoremstyle{definition}
\theoremstyle{definition}
\newtheorem{definition}[theorem]{Definition}
\newtheorem{remark}[theorem]{Remark}
\newtheorem{example}[theorem]{Example}
\begin{document}
\baselineskip=17pt
\title[]{Free actions of connected Lie groups of rank one on certain spaces}
\author[Hemant Kumar Singh, Jaspreet Kaur and Tej. B. Singh]{Hemant Kumar Singh, Jaspreet Kaur and Tej Bahadur Singh}
\address{{\bf Hemant Kumar Singh}, 
Department of Mathematics, Keshav Mahavidyalaya, H-4-5 zone, Sainik Vihar Pitampura, Delhi- 110034, India.}\email{
hksinghdu@gmail.com}
\address{{\bf Jaspreet Kaur},
Department of Mathematics, University of Delhi,
Delhi -- 110007, India.}
\email{jasp.maths@gmail.com}
\address{
{\bf Tej Bahadur Singh},
Department of Mathematics, University of Delhi,
Delhi -- 110007, India.}
 \email{tbsingh@maths.du.ac.in}

\date{}

\begin{abstract} 
 Let $G$ be a connected Lie group of rank one. In this paper the existence of free actions of group $G$ on spheres, real projective spaces and lens spaces has been studied. Most of the results have been obtained for finitistic spaces with cohomology ring isomorphic to the cohomology ring of these spaces. The cohomology algebra of the orbit space has been determined in the case of free $G$-action. The non-existence of $G$-equivariant maps from spheres to cohomology spheres has also been established.   
\end{abstract}
\subjclass[2010]{Primary 57S17; Secondary 55R20, 55M20 }

\keywords{Free action; finitistic space; Leray spectral sequence; mod $p$ cohomology algebra}

\maketitle

\section {Introduction}
The classification of finite groups that can act freely on complexes homotopic to sphere is an old and well studied problem in the theory of transformation groups. Popularly known as `topological spherical space form problem', this problem was stated by H. Hopf in the year 1925. But it attracted the interest of topologists only after J. Milnor's work \cite{Mil} in 1957. A survey by J.F. Davis and R.J. Milgram \cite{Dav} gives in detail the development related to this problem. This problem has been posed and studied for spaces other than sphere and groups other than finite groups as well. For instance B. Oliver \cite{Oli} showed that a compact Lie group has a free action on some product of spheres $\prod^{n}\mathbb S^k$ iff it has no subgroup isomorphic to $SO(3)$ (the group of rotations about the origin of three dimensional space $\mathbb R^3$). We, in this paper discuss the problem of existence of free actions of connected Lie groups of rank one on sphere, real projective spaces and lens spaces.\\ 
\indent{ \indent Real projective spaces and lens spaces are orbit spaces of a free linear action of a finite cyclic groups on spheres. An action $\theta$ of a topological group $G$ on a Hausdorff topological space $X$ is called \textit{free} if the \textit{isotropy subgroup} $G_x=\{g\in G|\theta(g,x)=x\}$ of $G$ at $x$ is trivial for each $x\in X$. The subset $G(x)=\{\theta(g,x)| g\in G\}$ of $X$ is called an \textit{orbit} and the set $X/G=\{G(x)|x\in X\}$ endowed with the quotient topology is called the \textit{orbit space}. The map (continuous function) sending $x\in X$ to $G(x)\in X/G$ is called the \textit {canonical projection} and will be denoted by $\pi:X\rightarrow X/G$. A real projective space and a lens space is thus described as follows:\\ 
\indent A \textit{$n$-dimensional real projective space} $\mathbb RP^n$ is the orbit space of the antipodal involution on an $n$-dimensional sphere $\mathbb S^n$ given by $$(x_1,x_2,\cdots, x_{n+1})\rightarrow (-x_1,-x_2,\cdots,-x_{n+1}).$$  Note that an involution on a space $X$ describes a continuous action of the group $\mathbb Z_2$ on $X$.  Now consider $(2n-1)$-dimensional sphere $\mathbb S^{2n-1}\subset \mathbb C^m$ as $\{z=(z_1,\cdots,z_n)|\left\|z\right\|=1\}$
Let $\epsilon =e^{2\pi\iota/p}$ be a primitive $p$th root of unity and let $q_1, q_2, \cdots , q_n$ be integers relatively prime to $p$. Then the map
$(z_1, . . . , z_n)\rightarrow (\epsilon^{q_1}z_1,\cdots,\epsilon^{q_n}z_n )$
defines a free action of the cyclic group $\mathbb Z_p$ on $\mathbb S^{2n-1}$. The orbit space of this action is denoted by $L^{2n-1}(p ;q_1, . . . , q_n)$ and is called a \textit{lens space}.We note that $L^{2n-1}(2 ;1, . . . , 1)$ is nothing but the $(2n-1)$-dimensional real projective space $\mathbb RP^{2n-1}$. For convenience, we at times write $L^{2n-1}(p ;q_1, . . . , q_n)$ as $L^{2n-1}(p,q)$. 

\indent It is well known that there are precisely three connected Lie groups of rank one, namely the circle group $\mathbb S^1$, the group of unit quaternions $\mathbb S^3$ and the group of rotations $SO(3)$. Now, by Lefschetz fixed point theorem it is well known that only finite group which can act freely on $\mathbb S^n$, $n$ even, is the cyclic group $\mathbb Z_2$. Thus, the circle group cannot act freely on $\mathbb S^n$, $n$ even. However, free actions of the group $\mathbb S^1$ on $\mathbb S^n$, $n$ odd, are possible. For example, the map $\theta:\mathbb S^1\times \mathbb S^{2n-1}\rightarrow \mathbb S^{2n-1}$ given by $\theta(z,(z_1,\cdots, z_m))=(zz_1,\cdots, zz_n)$ defines a free action of the group $\mathbb S^1$ on $(2n-1)$-sphere $\mathbb S^{2n-1}$. Now, free actions of $\mathbb S^1$ on lens spaces can be easily constructed by considering free actions of $\mathbb S^1$ on odd dimensional spheres and using the fact that if a topological group $G$ acts freely on a Hausdorff topological space $X$ and $N$ is a normal subgroup of $G$ then $G/N$ acts freely on $X/N$. For example, the action $\theta$ in the above example induces a free action of the topological group $\mathbb S^1/N \cong \mathbb S^1$ (topologically isomorphic) on the lens space $L^{2n-1}(p,1,\cdots, 1)=\mathbb S^{2n-1}/N$, $N=<\xi>$, $\xi=e^{2\pi \iota/p}$. In particular for $p=2$, we get a free action of $\mathbb S^1$ on an odd dimensional real projective space $\mathbb RP^{2n-1}$. Using Floyd-Euler characteristic formula \cite{Bre} it can be shown that the group $\mathbb Z_2$ cannot act freely on even dimensional real projective spaces. Thus we note that the existence of a free action of circle group $\mathbb S^1$ on $\mathbb RP^n$, $n$ even, is not possible. By $X\sim_p \mathbb S^n$, we mean that the cohomology ring $H^*(X;\mathbb Z_p)$ is isomorphic to the cohomology ring $H^*(\mathbb S^n;\mathbb Z_p)$. We will call such a space as \textit{mod $p$ cohomology sphere}. Similarly if $H^*(X;\mathbb Z_p)\cong H^*(\mathbb RP^n;\mathbb Z_p)$ (resp. $H^*(X;\mathbb Z_p)\cong H^*(L^{2n-1}(p ,q);\mathbb Z_p)$) then $X$ will be called \textit{mod $p$ cohomology real projective space}( resp.\textit{mod $p$ cohomology lens space}) and in short we will write $X\sim_p \mathbb RP^n$ (resp. $X\sim_p L^{2n-1}(p,q)$). Throughout the paper we will use $\check{\mbox{C}}$ech cohomology with $\mathbb Z_p$ coefficients, $p$ prime.  We will employ spectral sequence techniques to prove that the group $\mathbb S^3$ cannot act freely on a finitistic space $X$ which is either a mod $2$ real projective space $\mathbb RP^n$, $n\in \mathbb N$ or mod $p$ lens space $L^{2n-1}(p ;q_1, . . . , q_n)$, $p$ an odd prime. However we show that if $\mathbb S^3$ acts freely on finitistic space $X\sim_2\mathbb S^m$ then $m=4n-1$ for some $n\in\mathbb N$. In this case will will also determine the possible mod $2$ cohomology algebra of the orbit space $X/\mathbb S^3$. We remark that the orbit space of free $\mathbb S^1$ action on a finitistic space $X\sim_2 \mathbb RP^n$, $n$ odd, and a finitistic space $X\sim_p L^{2n-1}(p ,q) $, $p$ an odd prime, has been determined in \cite{Hemant}. Now it is already known that the group $SO(3)$ cannot act freely on sphere of any dimension. Using this fact and some already available results on lifting of group actions we are able to conclude that the group $SO(3)$ cannot act freely on lens spaces. We will discuss this in Section 4 of this paper. We will also discuss the existence of free action of the group $SO(3)$ on real projective spaces in this section.

\section {Preliminaries for proof of main theorems}

 In this paper we will consider only paracompact and Hausdorff topological spaces. We begin by recalling the definition of a `finitistic space'. These spaces were introduced by Swan \cite{swan} in the year 1960. Since then these spaces have become popular in the study of topological transformation group. \begin{definition} A topological space $X$ is said to be \textit{finitistic} if every open covering of $X$ has a finite dimensional open refinement, where the dimension of a covering is one less than the maximum number of members of the covering which intersect non-trivially.\end{definition} Two main classes of finitistic spaces are the compact spaces and the finite-dimensional spaces (spaces with finite covering dimension). The importance of finitistic spaces in the theory of transformation group is reflected in the following theorem.

\begin{theorem}\cite{SD} If $G$ is a compact Lie group acting continuously on a space $X$, then the space $X$ is finitistic if and only if the orbit space $X/G$ is finitistic. \end{theorem}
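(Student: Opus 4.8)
The plan is to exploit that, for a compact Lie group $G$, the orbit map $\pi:X\to X/G$ is an especially well-behaved quotient: it is continuous, surjective, open, and (because $G$ is compact and $X$ is Hausdorff) closed, while each of its fibres is an orbit $G(x)\cong G/G_x$, a compact manifold of dimension at most $d:=\dim G$. Since $\pi$ is a closed surjection, $X/G$ is paracompact Hausdorff precisely when $X$ is, so both spaces lie in the category where the finitistic condition is meaningful. I would first record the \emph{saturated-set correspondence}: $V\mapsto\pi(V)$ is an order-preserving bijection between $G$-invariant open sets of $X$ and open sets of $X/G$, with inverse $U\mapsto\pi^{-1}(U)$. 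Consequently, a saturated open cover $\{V_\beta\}$ of $X$ of order $\le n+1$ refining $\{\pi^{-1}(U_\alpha)\}$ descends to the open cover $\{\pi(V_\beta)\}$ of $X/G$, which refines $\{U_\alpha\}$ and again has order $\le n+1$, because membership in a saturated set is constant along orbits.

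For the implication that $X$ finitistic forces $X/G$ finitistic, I would start from an open cover $\{U_\alpha\}$ of $X/G$, pull it back to the saturated cover $\{\pi^{-1}(U_\alpha)\}$ of $X$, and use finitisticness of $X$ to extract a finite-dimensional open refinement $\mathcal V$. The difficulty is that $\mathcal V$ need not be $G$-invariant, so it cannot be pushed down directly. The remedy is the Mostow--Palais slice theorem: each orbit has a $G$-invariant tubular neighbourhood $G$-homeomorphic to a twisted product $G\times_{G_x}S_x$. Over such a tube I would straighten $\mathcal V$ into a saturated finite-dimensional refinement, the passage costing at most $d$ in dimension since the group direction $G/G_x$ is a compact manifold of dimension $\le d$; then I would patch these local invariant refinements, using paracompactness and a locally finite structure subordinate to the tubes, into a global saturated finite-dimensional refinement of $\{\pi^{-1}(U_\alpha)\}$. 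Finally the saturated-set correspondence sends this refinement down to a finite-dimensional refinement of $\{U_\alpha\}$, proving $X/G$ finitistic.

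For the converse, assuming $X/G$ finitistic, I would prove $X$ finitistic by a fibrewise argument over the same tubes. Given an arbitrary open cover $\mathcal U$ of $X$, over each tube $\pi^{-1}(W)\cong G\times_{G_x}S_x$ I would combine two ingredients: a finite-dimensional refinement of the induced cover of the base $W\subseteq X/G$, pulled back through $\pi$ (this exists because $X/G$ is finitistic and contributes a bounded horizontal dimension), and a finite cover of the compact orbit direction refining $\mathcal U$ (contributing a vertical dimension $\le d$). Since the order of a product of covers is controlled by the sum of their dimensions, this yields a finite-dimensional refinement of $\mathcal U$ over each tube with uniformly bounded order. Choosing the tube cover itself to have bounded order, possible again because $X/G$ is finitistic, keeps the number of tubes through a point bounded, so the local refinements patch to a global refinement of $\mathcal U$ whose order is bounded by the product of the two bounds.

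The main obstacle in both directions is the same, and is exactly where the hypotheses on $G$ are used: one must pass between an arbitrary open cover and a $G$-invariant one without destroying finite-dimensionality, and then amalgamate tube-local refinements into a single global one of bounded order. Compactness of $G$ (giving closedness of $\pi$, compact fibres, and the slice theorem) together with finiteness of $\dim G$ is precisely what bounds the extra multiplicity introduced by the group direction by $d$; the careful bookkeeping of orders during the paracompact patching, so that overlaps of the tubes do not inflate the dimension without bound, is the delicate technical point.
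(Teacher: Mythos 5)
First, note that the paper does not prove this statement at all: it is imported from Deo--Tripathi \cite{SD} and used as a black box (for instance in the proof of Lemma 3.1, where the cofinality of finite-dimensional invariant open covers of $X/G$ is attributed to it). So your attempt can only be measured against the known proof, not against anything written here. Your overall architecture --- the orbit map $\pi$ is open, closed and proper with fibres $G/G_x$ of dimension at most $d=\dim G$, the saturated-set correspondence identifies invariant open covers of $X$ with open covers of $X/G$, and the slice theorem supplies the local structure --- is the right frame, and your second direction ($X/G$ finitistic $\Rightarrow$ $X$ finitistic) is essentially sound: for each orbit one takes a finite subfamily of $\mathcal U$ covering it with order at most $d+1$ on the fibre (possible since $\dim G/G_x\le d$), pushes that order bound out to a saturated neighbourhood of the orbit using closedness of $\pi$, refines the resulting cover of $X/G$ finite-dimensionally, and accepts a multiplicative rather than additive loss in the order; since any finite bound suffices for finitisticness, your imprecision about ``the sum of the dimensions'' is harmless.

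The genuine gap is in the first direction, at the step ``straighten $\mathcal V$ into a saturated finite-dimensional refinement, the passage costing at most $d$ in dimension.'' No mechanism is given, and the obvious one fails: the saturation $V\mapsto GV$ preserves openness and the refinement property but destroys all control on the order (a cover of a positive-dimensional orbit $G/G_x$ by $N$ small open sets has order $2$, while all $N$ saturations contain the whole orbit, so the saturated family has order $N$ there). By the saturated-set correspondence you yourself set up, producing an invariant finite-dimensional refinement of $\{\pi^{-1}(U_\alpha)\}$ is exactly equivalent to producing a finite-dimensional refinement of $\{U_\alpha\}$, i.e.\ to the conclusion, so asserting the straightening with a one-line dimension count is circular in effect. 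Note also that the theorem concerns arbitrary continuous actions, not free ones, so at a fixed point the tube is just a $G$-space $S_x$ and ``straightening over the tube'' is the original problem over again. An honest proof must either run an induction on $\dim G$ and the orbit types, feeding in the nontrivial dimension-theoretic inequality $\dim(S/H)\le\dim S$ for a compact Lie group $H$ acting on a slice $S$, or combine the characterisation of finitistic paracompact spaces by a compact subset $C$ with $\dim(X\setminus U)<\infty$ for every open $U\supseteq C$ with the classical inequalities $\dim(X/G)\le\dim X\le\dim(X/G)+\dim G$. Your sketch contains neither ingredient.
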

We recall that an action of a topological group $G$ on a space $X$ is called \textit{semi-free} if for each $x\in X$ the isotropy subgroup $G_x$ is either trivial or all of $G$. Clearly, a free group action is semi-free while the converse is certainly not true. We shall denote the subspace of fixed points ($x\in X$ is \textit{fixed point} if $G_x=G$) by $X^G=\{x\in X\;|\;gx=x\;\forall\;g\in G\}$.\\
\indent In \cite{Bre-pap} G.E. Breadon derived an exact sequence for sphere bundles with singularities called the `Smith-Gysin sequence' which is a generalization of the Gysin sequence and an analogue of the Smith sequence. But for our purpose, we note here only the particular case as follows. 

\begin{theorem} Let the group $G=\mathbb{S}^3$ acts semifreely on the space $X$. Then there is the following Smith-Gysin long exact sequence (with arbitrary coefficients): 
$$....\rightarrow H^{i}(X/G,X^G)\rightarrow H^i(X)\rightarrow H^{i-3}(X/G,X^G)\oplus H^i(X^G)\rightarrow H^{i+1}(X/G,X^G)\rightarrow ....$$\end{theorem}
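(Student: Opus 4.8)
The plan is to analyze the orbit map $\pi\colon X\to X/G$ through its Leray spectral sequence, exploiting the fact that semifreeness makes $\pi$ an honest $\mathbb S^3$-fibration away from the fixed set and a homeomorphism over it. Write $F=X^G$ and identify $F$ with its image in $X/G$ under the homeomorphism $\pi|_F$; then $X\smallsetminus F\to (X/G)\smallsetminus F$ is a locally trivial $\mathbb S^3$-bundle, while each fibre over a point of $F$ is a single point. First I would compute the Leray sheaves $\mathcal H^q(\pi)$ with coefficients in an abelian group $R$: since every fibre is connected one has $\mathcal H^0=\underline R$ (the constant sheaf on $X/G$), and since the fibre cohomology of $\mathbb S^3$ is concentrated in degrees $0$ and $3$ with the orientation preserved by the connected group $G$, the only other nonzero sheaf is $\mathcal H^3=j_!\underline R$, the extension by zero of the constant sheaf along the open inclusion $j\colon (X/G)\smallsetminus F\hookrightarrow X/G$. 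Here the paracompactness and finitistic hypotheses, together with the orbit-space theorem quoted above, are what guarantee that $\check{\mathrm C}$ech cohomology agrees with sheaf cohomology and that the relevant pairs are taut, so these stalk computations are legitimate.

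With the sheaves in hand, the $E_2$-page has only the two rows $q=0,3$. Using the standard identification $H^p\!\big(X/G;\,j_!\underline R\big)\cong H^p(X/G,F)$ coming from the short exact sequence of sheaves
$$0\longrightarrow j_!\underline R\longrightarrow \underline R\longrightarrow i_*\underline R_F\longrightarrow 0,$$
I obtain $E_2^{p,0}=H^p(X/G)$ and $E_2^{p,3}\cong H^p(X/G,F)$. Because the two rows are separated by $3$, the differentials $d_2,d_3$ vanish and the spectral sequence collapses at $E_5=E_\infty$, the single possibly nonzero differential being the transgression $d_4$, an Euler-class homomorphism landing in $H^{\ast+4}(X/G)$. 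Reading off this two-row spectral sequence already yields a Gysin-type sequence; the remaining task is to put it into the stated form, in which the first term is the \emph{relative} group $H^i(X/G,F)$ and the third term carries the extra summand $H^i(X^G)$.

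To produce exactly this form I would replace the constant sheaf by $j_!\underline R$ at the outset: that is, form the mapping cone $Q$ of the natural composite $j_!\underline R\to\underline R\to R\pi_{*}\underline R_X$ (the inclusion of the degree-zero part of the total direct image), whose hypercohomology long exact sequence has consecutive terms $H^i(X/G,F)=\mathbb H^i(X/G;j_!\underline R)$ and $H^i(X)=\mathbb H^i(X/G;R\pi_{*}\underline R_X)$, giving the first arrow of the claimed sequence. The cone $Q$ sits in a triangle assembled from the two factors $i_*\underline R_F$ (from the displayed sheaf sequence) and $(j_!\underline R)[-3]$ (from the top row of the direct image); the crucial point is that these two sheaves have \emph{disjoint supports} — one is carried by the closed set $F$, the other by its open complement — so every morphism between them vanishes and the triangle splits, whence $\mathbb H^i(X/G;Q)\cong H^{i-3}(X/G,F)\oplus H^i(F)$, which is precisely the third term. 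Assembling the connecting maps (the relative Euler-class operation together with the connecting homomorphism of the pair $(X/G,F)$) then gives the sequence as stated. I expect the main obstacle to be exactly this last bookkeeping — pinning down the Leray sheaves over the fixed set and justifying the splitting and splicing at the cochain level — rather than any single computation; alternatively, one may simply quote the result as the $n=3$ specialization of Bredon's general Smith--Gysin sequence for sphere bundles with singularities \cite{Bre-pap}.
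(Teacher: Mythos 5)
The paper gives no proof of this statement: it is quoted (in the special case $G=\mathbb S^3$, fibre dimension $3$) from Bredon's paper on cohomology fibre spaces and the Smith--Gysin sequence \cite{Bre-pap}, so there is no in-text argument to compare yours against. Your sketch in effect reconstructs the argument of that source --- the Leray spectral sequence of the orbit map, with Leray sheaves $\underline R$ in degree $0$ and the extension by zero $j_!\underline R$ in degree $3$ --- recast in derived-category language, and the strategy is sound. The identification of $\mathcal H^3$ with $j_!\underline R$ is correct (closedness of $\pi$ and tautness of the compact fibres give the stalk computation, connectedness of $\mathbb S^3$ gives constancy on the free part, and any sheaf with vanishing stalks on the closed set $F$ is the extension by zero of its restriction to the open complement), and passing from the naive two-row Gysin sequence, which contains the absolute groups $H^i(X/G)$, to the stated relative form via the cone on $j_!\underline R\to R\pi_*\underline R_X$ is exactly the right device. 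One sentence needs repair: it is \emph{not} true that every morphism between $i_*\underline R_F$ and $(j_!\underline R)[-3]$ vanishes --- your own displayed short exact sequence of sheaves represents a generally nonzero class in $\mathrm{Ext}^1(i_*\underline R_F,\, j_!\underline R)$, so ``disjoint supports'' is too coarse a reason. What the splitting actually requires is that the connecting morphism of the octahedron points from the $j_!$ factor to the $i_*$ factor, and in that direction $R\mathrm{Hom}(j_!\underline R,\, i_*\underline R_F)\cong R\mathrm{Hom}(\underline R,\, j^*i_*\underline R_F)=0$ by the adjunction between $j_!$ and $j^*$, since $j^*i_*=0$. With that correction, and with the paper's standing paracompactness hypothesis supplying the slice theorem (local triviality over the free part) and tautness (the finitistic hypothesis is not needed for this theorem), your outline is a legitimate proof; simply citing Bredon, as the paper does and as you note at the end, is of course also acceptable.
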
 The homomorphism $\check{H}^{i-3}(X/G,X^G)\stackrel{\mu^*}{\rightarrow} H^{i+1}(X/G,X^G)$ which appears in the above exact sequence is the cup product with an element $\omega$, the generator, of $H^4(X/G,X^G).$ \\

\indent{We now recall the construction of Borel space and some results on spectral sequences. For details we refer \cite{Mc}, \cite{Gau} and \cite {Tom}. Let the group $G$ be a compact Lie group acting (not necessarily freely) on a finitistic space $X$. Let 
$$G \hookrightarrow E_G \longrightarrow B_G$$
 be the universal principal $G$-bundle. Consider the diagonal action of $G$ on $X \times E_G$. Let 
$$X_G=(X \times E_G) /G$$ 
be the orbit space of the diagonal action on $X \times E_G$. Then the projection $X \times E_G \to E_G$ is $G$-equivariant and gives a fibration 
$$X\hookrightarrow X_G \longrightarrow B_G$$ 
called the \textit{Borel fibration} and the space $X_G$ is called the \textit{Borel space}. If compact Lie group acts freely on a space $X$, then $X\rightarrow X/G$ is a principal $G$-bundle and one can take $h:X/G\rightarrow B_G$ a classifying map for the $G$-bundle $X\rightarrow X/G$. We will exploit the Leray- Serre spectral sequence associated to the Borel fibration $X \hookrightarrow X_G \longrightarrow B_G$.}
We also note that, 

\begin{enumerate}\item  For $G=\mathbb S^3$, the classifying space $B_G$ is the infinite dimensional quaternionic projective space, $\mathbb HP^\infty$, and 
$H^*(B_G; \mathbb Z_p)\cong \mathbb Z_p[x]$, deg $x=4$, 

\item $H^*(\mathbb S^n,\mathbb Z_2)\cong\mathbb Z_2[x]/<x^2>$, deg $x=n$,

\item $H^*(\mathbb RP^n,\mathbb Z_2)\cong \mathbb Z_2[x]/<x^{n+1}>$, deg $x=1$,  and 

\item For odd prime $p$, $H^*(L^{2n-1}(p,q),\mathbb Z_p)\cong \mathbb Z_p[x,z]/<x^2,z^n>$, deg $x=1$, deg $z=2$, and $\beta(x)=z$ where $\beta:H^1(L^{2n-1}(p,q),\mathbb Z_p)\rightarrow H^2(L^{2n-1}(p,q),\mathbb Z_p)$ is Bockstein homomorphism associated to the coefficient sequence $0\rightarrow \mathbb Z_p\rightarrow Z_{p^2}\rightarrow\mathbb Z_p\rightarrow 0$. \end{enumerate}

\begin{proposition}
Let $X\stackrel{i}{\hookrightarrow} X_G \stackrel{\pi}{\longrightarrow} B_G$ be the Borel fibration. Suppose that the system of local coefficients on $B_G$ is simple, then the edge homomorphisms
{\setlength\arraycolsep{35pt}
\begin{eqnarray}
\lefteqn{ H^k(B_G)=E_2^{k,0} \longrightarrow E_3^{k,0}\longrightarrow \cdots  }
                    \nonumber\\
& &   \longrightarrow E_k^{k,0} \longrightarrow E_{k+1}^{k,0}=E_{\infty}^{k,0}\subset H^k(X_G) \nonumber
\end{eqnarray}}
and  $$H^l(X_G) \longrightarrow E_{\infty}^{0,l}= E_{l+1}^{0,l} \subset E_{l}^{0,l} \subset \cdots \subset E_2^{0,l}= H^l(X)$$
are the homomorphisms $$\pi^*: H^k(B_G) \to H^k(X_G) ~ ~ ~ \textrm{and} ~ ~ ~ i^*: H^l(X_G)  \to H^l(X).$$
\end{proposition}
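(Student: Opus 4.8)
The plan is to deduce both identifications from the naturality of the Leray--Serre spectral sequence together with a comparison to two degenerate fibrations. First I would recall, from the construction of the spectral sequence via the filtration of $X_G$ by the preimages $\pi^{-1}(B_G^{(p)})$ of the skeleta of the base, that the simple-coefficient hypothesis yields $E_2^{p,q}\cong H^p(B_G;H^q(X))$ with untwisted coefficients; in particular $E_2^{k,0}=H^k(B_G)$ and $E_2^{0,l}=H^l(X)$. Writing $0=F^{n+1}H^n(X_G)\subset F^n\subset\cdots\subset F^0=H^n(X_G)$ for the induced filtration, one has $F^p/F^{p+1}\cong E_\infty^{p,n-p}$. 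Hence $E_\infty^{k,0}=F^kH^k(X_G)$ is a subgroup of $H^k(X_G)$ (the remaining graded pieces in total degree $k$ vanishing), and $E_\infty^{0,l}=H^l(X_G)/F^1H^l(X_G)$ sits as a subgroup of $E_2^{0,l}=H^l(X)$. These are exactly the sub/quotient identifications appearing in the two displayed edge homomorphisms.

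For the bottom edge homomorphism I would compare the Borel fibration with the trivial fibration $\mathrm{pt}\hookrightarrow B_G\xrightarrow{=}B_G$. The map $\pi$ on total spaces together with $\mathrm{id}_{B_G}$ on bases is a morphism of fibrations, and so induces a morphism of spectral sequences. For the trivial fibration the spectral sequence collapses at $E_2$ and its bottom edge homomorphism is the identity of $H^k(B_G)$. On $E_2^{k,0}$ the induced map is the pullback on base cohomology, here $\mathrm{id}^*=\mathrm{id}$, while on $E_\infty^{k,0}\subset H^k(X_G)$ it is the restriction of $\pi^*$. Naturality of the edge homomorphism then forces the composite $H^k(B_G)=E_2^{k,0}\to\cdots\to E_\infty^{k,0}\subset H^k(X_G)$ to equal $\pi^*$.

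The fiber edge is handled symmetrically, now comparing the Borel fibration with the fibration $X\xrightarrow{=}X\to\mathrm{pt}$ over a basepoint, using the inclusion $i\colon X\hookrightarrow X_G$ covering $\mathrm{pt}\hookrightarrow B_G$. This morphism induces a map of spectral sequences whose effect on $E_2^{0,l}$ is the pullback on fiber cohomology, namely $\mathrm{id}^*=\mathrm{id}$ on $H^l(X)$, and whose effect on total spaces is $i^*$; since the point-base fibration collapses with fiber edge map equal to the identity of $H^l(X)$, naturality identifies the composite $H^l(X_G)\to E_\infty^{0,l}\subset\cdots\subset E_2^{0,l}=H^l(X)$ with $i^*$. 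The main work, and the only real obstacle, lies in justifying the two $E_2$-level claims directly from the construction: that a morphism of fibrations covering $\bar f$ induces on the bottom row the pullback $\bar f^{\,*}$ on base cohomology, and on the left column the pullback on fiber cohomology. Both follow by tracing the filtration-induced maps, and it is precisely the simple-coefficient hypothesis that makes the left-column identification the honest restriction $i^*$ to the fiber rather than a twisted analogue.
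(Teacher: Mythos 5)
Your argument is correct and is the standard one: identify $E_\infty^{k,0}=F^kH^k(X_G)$ and $E_\infty^{0,l}=H^l(X_G)/F^1H^l(X_G)$ from the filtration, then pin down both edge maps by naturality of the spectral sequence against the collapsed fibrations $\mathrm{pt}\hookrightarrow B_G\rightarrow B_G$ and $X\rightarrow X\rightarrow \mathrm{pt}$. The paper itself offers no proof of this proposition --- it is recalled as a known preliminary with references to McCleary, Mukherjee et al.\ and tom Dieck --- and your sketch matches the textbook treatment found there, including the correct observation that the simple-coefficient hypothesis is what identifies $E_2^{0,l}$ with $H^l(X)$ so that the fiber edge map is literally $i^*$.
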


\begin{proposition}
Let $G$ be a compact Lie group acting freely on a finitistic space $X$. Then $$h^*: H^*(X/G) \stackrel{\cong}{\longrightarrow} H^*(X_G).$$
\end{proposition}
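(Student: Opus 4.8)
The plan is to realize the isomorphism through the \emph{second} projection of the Borel construction rather than through the classifying map directly. First I would consider the $G$-equivariant projection $X\times E_G\to X$ onto the first factor, which, after passing to $G$-orbits, descends to a map $q\colon X_G=(X\times E_G)/G\to X/G$. Because the $G$-action on $X$ is free and $X$ is finitistic (hence paracompact Hausdorff), the orbit map $X\to X/G$ is a principal $G$-bundle, so $X_G$ is exactly the associated bundle $X\times_G E_G$ and $q$ is a locally trivial fibre bundle with fibre $E_G$. This $q$ is the geometric map whose induced homomorphism realizes the claimed $h^*$ once $X_G$ is identified with $X/G$; indeed a $G$-equivariant map $X\to E_G$ produces a section of $q$ and, on orbit spaces, recovers the classifying map $h$.

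Next I would feed the fibration $E_G\hookrightarrow X_G\stackrel{q}{\to} X/G$ into the Leray--Serre spectral sequence. Its $E_2$-term is $E_2^{k,l}=H^k\!\big(X/G;\,\mathcal H^l\big)$, where $\mathcal H^l$ is the local coefficient system with stalks $H^l(E_G)$. Since $E_G$ is contractible we have $H^0(E_G)\cong\mathbb Z_p$ and $H^l(E_G)=0$ for $l>0$; in particular the coefficient system is simple and concentrated in the bottom row. Hence $E_2^{k,l}=0$ for $l>0$ while $E_2^{k,0}=H^k(X/G)$, so the spectral sequence collapses at $E_2$ with no nonzero differentials. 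Exactly as in the preceding proposition on edge homomorphisms (whose simplicity hypothesis holds here), the bottom-row edge homomorphism is $q^*$, and the collapse forces $q^*\colon H^k(X/G)\stackrel{\cong}{\to}E_\infty^{k,0}=H^k(X_G)$ to be an isomorphism for every $k$, which is the asserted isomorphism $h^*$.

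I expect the genuinely delicate points to be infrastructural rather than computational, since the contractibility of $E_G$ trivially annihilates every higher row. The first is justifying that freeness of a compact Lie group action on a finitistic space makes $X\to X/G$ a principal $G$-bundle and $q$ locally trivial; here I would invoke the slice theorem for free actions of compact Lie groups together with the paracompactness supplied by the finitistic hypothesis (and the theorem on finitistic orbit spaces to keep $X/G$ within the finitistic world). The second is the legitimacy of the Leray--Serre (equivalently, Leray-of-the-map) spectral sequence with $\check{\mathrm C}$ech $\mathbb Z_p$-coefficients over a base $X/G$ that need not be well behaved; paracompactness and local triviality are precisely what guarantee that the sheaf $\mathcal H^l$ has the stalks $H^l(E_G)$ and that the sequence converges. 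The main obstacle is therefore to set up this spectral sequence correctly in the finitistic, $\check{\mathrm C}$ech setting; once that is in place, the acyclicity of the fibre does all the remaining work and the isomorphism falls out of the edge homomorphism.
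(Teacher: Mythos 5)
Your argument is correct and is essentially the justification the paper has in mind: the paper offers no written proof of this proposition, only the remark immediately afterwards that $X/G$ and $X_G$ have the same homotopy type, and that remark rests on exactly the observation you exploit, namely that the second projection $q\colon X_G=X\times_G E_G\to X/G$ is a bundle with contractible fibre $E_G$ once freeness makes $X\to X/G$ a principal $G$-bundle. Your Leray spectral sequence collapse for $q$ is just the cohomological form of that homotopy equivalence, so the two routes coincide in substance.
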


In fact $X/G$ and $X_G$ have the same homotopy type. Further, the $E_2$-term of the spectral sequence in this case is given by $E_2^{p,q}=H^p(B_G;(\mathcal H^q(X)))$, where $\mathcal H^q$ means a locally constant sheaf with stalk $\mathcal H^q(X)$ and group $G$, and it converges to $H^*(X_G)$. If the fundamental group $\pi_1(B_G)$ acts trivially on the cohomology of the fibre, $H^*(X)$, then the $E_2$-term of the spectral sequence takes the simple form $E_2^{p,q}=H^p(B_G)\otimes H^q(X)$.  \\
\indent{Let a group $G$ acts on spaces $X$ and $Y$. Then a continuous map $f:X\rightarrow Y$ is called \textit{$G$-equivariant} if for any $g\in G$ and $x\in X$, $f(g.x)=g.f(x)$. In \cite{Mattos} C. Biasi and D. Mattos proved the following Borsuk-Ulam type theorem, which deals with non-existence of $G$-equivariant maps.
\begin{theorem}Let $R$ be a P.I.D and $G$ be a compact Lie group acting freely on path connected spaces $X$ and $Y$. Suppose for some $m\in M$ $$H^q(X,R)=0 \;; 0<q<m$$ $$H^{m+1}(Y/G,R)=0.$$ If $\beta_m(X,R)<\beta_{m+1}(B_G,R)$, where $B_G$ is classifying space of $G$ and $\beta_i$ denotes the ith Betti number. Then there is no $G$-equivariant map $f:X\rightarrow Y$.\end{theorem}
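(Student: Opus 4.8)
The plan is to argue by contradiction, transporting the hypotheses to the Borel fibrations and then reading off a contradiction from the edge homomorphism of Proposition 2.4. Suppose there were a $G$-equivariant map $f\colon X\to Y$. Since the diagonal $G$-actions on $X\times E_G$ and $Y\times E_G$ are free, the equivariant map $f\times\mathrm{id}_{E_G}$ descends to a map $f_G\colon X_G\to Y_G$ of Borel spaces commuting with the two projections to $B_G$; that is, $\pi_Y\circ f_G=\pi_X$, where $\pi_X\colon X_G\to B_G$ and $\pi_Y\colon Y_G\to B_G$ are the Borel fibrations. Passing to cohomology with coefficients in $R$, this yields the factorization $\pi_X^{*}=f_G^{*}\circ\pi_Y^{*}$ in every degree.

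First I would use the hypothesis on $Y$. Because $G$ acts freely, the free-action isomorphism $H^{*}(Y/G;R)\cong H^{*}(Y_G;R)$ (Proposition 2.5) together with the assumption $H^{m+1}(Y/G;R)=0$ forces $H^{m+1}(Y_G;R)=0$. Consequently $\pi_Y^{*}\colon H^{m+1}(B_G;R)\to H^{m+1}(Y_G;R)$ is the zero homomorphism, and by the factorization above the map $\pi_X^{*}\colon H^{m+1}(B_G;R)\to H^{m+1}(X_G;R)$ must also vanish.

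Next I would contradict this by analyzing the Leray--Serre spectral sequence of the Borel fibration $X\hookrightarrow X_G\to B_G$, whose $E_2$-term (with the local system simple, as in Proposition 2.4) is $E_2^{p,q}=H^{p}(B_G;R)\otimes H^{q}(X;R)$. The vanishing $H^{q}(X;R)=0$ for $0<q<m$ produces a band of zero rows, and an inspection shows that the only differential touching the base row in total degree $m+1$ is the transgression $d_{m+1}\colon E_{m+1}^{0,m}=H^{m}(X;R)\longrightarrow E_{m+1}^{m+1,0}=H^{m+1}(B_G;R)$: every earlier differential into $E_r^{m+1,0}$ (for $2\le r\le m$) originates in a zero row, and every differential out of $E_r^{0,m}$ lands in one. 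Hence $E_\infty^{m+1,0}=\operatorname{coker}(d_{m+1})$, and by Proposition 2.4 the edge homomorphism identifies the image of $\pi_X^{*}$ in degree $m+1$ with exactly this group.

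Finally comes the rank count, which I expect to be the delicate point. Over the PID $R$ the image of $d_{m+1}$ is a submodule of $H^{m+1}(B_G;R)$ of rank at most $\operatorname{rank}H^{m}(X;R)=\beta_m(X;R)$, so
$$\operatorname{rank}E_\infty^{m+1,0}=\operatorname{rank}\operatorname{coker}(d_{m+1})\ \ge\ \beta_{m+1}(B_G;R)-\beta_m(X;R)>0,$$
using the standing inequality $\beta_m(X;R)<\beta_{m+1}(B_G;R)$. Thus $E_\infty^{m+1,0}\neq 0$, so $\pi_X^{*}$ is nonzero in degree $m+1$, contradicting the conclusion of the second step; therefore no $G$-equivariant map $X\to Y$ can exist. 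The main obstacle is the bookkeeping in this last paragraph: one must verify that no differential other than the transgression alters the $(m+1,0)$ entry and that the free-rank (Betti number) estimate over $R$ is applied to the cokernel via additivity of rank on short exact sequences. If $\pi_1(B_G)$ acted nontrivially on $H^{*}(X)$ one would additionally have to replace the tensor-product $E_2$-page by $H^{p}(B_G;\mathcal H^{q}(X))$ and check that the band of zero rows persists.
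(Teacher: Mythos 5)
This statement is Theorem 2.6 of the paper, which is quoted from the cited work of Biasi and Mattos; the paper itself supplies no proof of it, so there is no internal argument to compare yours against. Your proposal is correct, and it is essentially the standard (indeed the original) argument: the factorization $\pi_X^{*}=f_G^{*}\circ\pi_Y^{*}$ through the induced map of Borel constructions, the vanishing of $\pi_Y^{*}$ in degree $m+1$ via $H^{m+1}(Y_G;R)\cong H^{m+1}(Y/G;R)=0$, and the nonvanishing of $\pi_X^{*}$ in degree $m+1$ read off from $E_\infty^{m+1,0}=\operatorname{coker}(d_{m+1})$ together with the rank count over the PID. Your bookkeeping is right: for $2\le r\le m$ every differential into $E_r^{m+1,0}$ originates in the vanishing band $0<q<m$, so the transgression $d_{m+1}\colon E_{m+1}^{0,m}\to E_{m+1}^{m+1,0}$ is the only differential that can change position $(m+1,0)$, and additivity of rank on short exact sequences over a PID gives $\operatorname{rank}E_\infty^{m+1,0}\ge\beta_{m+1}(B_G;R)-\beta_m(X;R)>0$. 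Two small points deserve explicit mention in a final write-up: (i) the identification $H^{*}(Y/G;R)\cong H^{*}(Y_G;R)$ for a free action of a compact Lie group (Proposition 2.5) is stated in the paper only for finitistic spaces and in general rests on the Vietoris--Begle theorem applied to $Y_G\to Y/G$, whose point-inverses are copies of the contractible space $E_G$; and (ii) since $G$ need not be connected, $\pi_1(B_G)\cong\pi_0(G)$ may act nontrivially on $H^{*}(X;R)$, but, as you note, the zero band persists, $E_2^{m+1,0}=H^{m+1}(B_G;R)$ because the monodromy on $\mathcal H^{0}$ of a connected fibre is trivial, and $E_2^{0,m}$ becomes the submodule of invariants of $H^{m}(X;R)$, which only strengthens the rank inequality.
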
}

\section{ Main theorems}

In this section we consider the question of existence of free action of the group $\mathbb S^3$ on finitistic mod $2$ sphere, mod $2$ real projective space and mod $p$ lens space, $p$ an odd prime. We first prove a lemma analogous to Proposition 10.7 Chap. III\cite{Bre}.

\begin{lemma} Let $G=\mathbb{S}^3$ act freely on a finitistic space $X$ with $H^i(X,\mathbb Z_p)=0$, $\forall i>n$. Then $H^i(X/G,\mathbb Z_p)=0$ $\forall i> n$. \end{lemma}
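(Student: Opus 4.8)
My plan is to reduce the statement to a Gysin‑type computation and then confront the finiteness issue. Since $G=\mathbb S^{3}$ acts freely, every isotropy group is trivial, so the action is in particular semi‑free with empty fixed‑point set $X^{G}=\emptyset$; hence Theorem 2.3 applies. With $X^{G}=\emptyset$ the relative groups collapse, $H^{*}(X/G,X^{G})\cong H^{*}(X/G)$ and $H^{*}(X^{G})=0$, so the Smith--Gysin sequence (taken with $\mathbb Z_p$ coefficients) degenerates to the Gysin exact sequence
\[
\cdots\to H^{i}(X/G)\xrightarrow{\pi^{*}}H^{i}(X)\to H^{i-3}(X/G)\xrightarrow{\ \mu^{*}\ }H^{i+1}(X/G)\xrightarrow{\pi^{*}}H^{i+1}(X)\to\cdots,
\]
where $\mu^{*}$ is cup product with the class $\omega\in H^{4}(X/G)$. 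Equivalently, I could read this off the Leray--Serre spectral sequence of the Borel fibration of Proposition 2.4, using Proposition 2.5 to identify $H^{*}(X/G)\cong H^{*}(X_{G})$ together with $H^{*}(B_{G};\mathbb Z_p)\cong\mathbb Z_p[x]$, $\deg x=4$; there the relevant transgression $d_{4}$ is exactly $\cup\,\omega$.

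Next I would feed the hypothesis $H^{i}(X;\mathbb Z_p)=0$ for $i>n$ into this sequence. Writing $j=i-3$ and isolating the four consecutive terms
\[
H^{j+3}(X)\to H^{j}(X/G)\xrightarrow{\ \mu^{*}\ }H^{j+4}(X/G)\to H^{j+4}(X),
\]
exactness shows that $\mu^{*}$ is injective as soon as $H^{j+3}(X)=0$ and surjective as soon as $H^{j+4}(X)=0$. Both vanish once $j\ge n-2$, so cup product with $\omega$ gives an isomorphism $H^{j}(X/G)\xrightarrow{\cong}H^{j+4}(X/G)$ for every $j\ge n-2$; that is, $H^{*}(X/G;\mathbb Z_p)$ is $4$‑periodic above degree $n-2$.

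The main obstacle is that this periodicity alone does not force vanishing: a free summand $\mathbb Z_p[\omega]$ would satisfy it while being nonzero in arbitrarily high degrees. Indeed the universal bundle $\mathbb S^{3}\hookrightarrow\mathbb S^{\infty}\to\mathbb{HP}^{\infty}$ is a free $\mathbb S^{3}$‑action with $H^{i}(\mathbb S^{\infty};\mathbb Z_p)=0$ for $i>0$ yet $H^{*}(\mathbb{HP}^{\infty};\mathbb Z_p)\neq0$ in all degrees divisible by $4$; what rescues the lemma is precisely that $\mathbb S^{\infty}$ is not finitistic. Thus the crucial remaining step, and the place where the finitistic hypothesis must enter, is to show that $X/G$ has finite $\mathbb Z_p$‑cohomological dimension. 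For this I would invoke Theorem 2.2 to see that $X/G$ is itself finitistic, and then exploit the orbit map $\mathbb S^{3}\hookrightarrow X\to X/G$, whose Leray spectral sequence has only the rows $q=0,3$ with $\mathcal H^{3}\cong\mathbb Z_p$, to bound the cohomology of $X/G$ in terms of that of $X$; once finiteness is known, this same spectral sequence pins the bound to $\mathrm{cd}_{\mathbb Z_p}(X/G)\le n-3$, consistently with the model $\mathbb{HP}^{k-1}$. Granted any finite bound $H^{i}(X/G;\mathbb Z_p)=0$ for $i\gg 0$, the isomorphisms $\cup\,\omega$ then propagate the vanishing downward through each residue class modulo $4$, yielding $H^{i}(X/G;\mathbb Z_p)=0$ for all $i>n$. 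I expect establishing this finiteness to be the heart of the argument.
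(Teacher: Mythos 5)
Your setup coincides with the paper's: freeness gives $X^{G}=\emptyset$, the Smith--Gysin sequence degenerates to a Gysin sequence, and exactness shows that cup product with $\omega\in H^{4}(X/G)$ is an isomorphism $H^{i-3}(X/G)\to H^{i+1}(X/G)$ for $i>n$. You also correctly diagnose that this periodicity alone proves nothing and that the finitistic hypothesis must supply some finiteness; the example $\mathbb S^{\infty}\to\mathbb{HP}^{\infty}$ is exactly the right thing to worry about. The gap is in how you propose to obtain that finiteness. The Leray spectral sequence of $\mathbb S^{3}\hookrightarrow X\to X/G$ cannot ``bound the cohomology of $X/G$ in terms of that of $X$'': its differentials $d_{4}\colon E_{4}^{p,3}\to E_{4}^{p+4,0}$ may kill arbitrarily much of $H^{p}(X/G)$ before the $E_{\infty}$-page, and your own example has literally the same two-row $E_{2}$-term with $H^{*}(X)$ concentrated in degree $0$ while the base is nonzero in all degrees $\equiv 0 \pmod 4$. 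So no comparison of $H^{*}(X/G)$ with $H^{*}(X)$ through that spectral sequence can yield $\mathrm{cd}_{\mathbb Z_p}(X/G)<\infty$; moreover ``finitistic'' does not by itself imply finite covering or cohomological dimension (compact spaces such as the Hilbert cube are finitistic), so Theorem 2.2 alone does not close the circle either.

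What the paper actually does at this point is argue at the level of individual \v{C}ech classes rather than first proving a global dimension bound. Since $X/G$ is finitistic, its finite-dimensional invariant open coverings are cofinal, so $\check{H}^{*}(X/G)=\varinjlim \check{H}^{*}(K(\mathcal U))$ over nerves $K(\mathcal U)$ of such coverings. Given $\nu\in H^{i_0}(X/G)$ with $i_0>n$, one chooses a single finite-dimensional covering $\mathcal V$ whose nerve carries lifts $\nu'$ of $\nu$ and $\omega'$ of $\omega$; for $k$ with $4k+i_0>\dim\mathcal V$ the product $\omega'^{k}\nu'$ vanishes in $H^{*}(K(\mathcal V))$ for dimension reasons, hence $\omega^{k}\nu=0$ in $H^{*}(X/G)$, and the injectivity of $\cup\,\omega$ in this range (which you already established) forces $\nu=0$. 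This nerve/direct-limit step is what your sketch is missing; with it in place the rest of your argument goes through unchanged.
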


\begin{proof} Using Smith-Gysin sequence (cf. Theorem 2.3), 
$$....\rightarrow H^{i}(X/G,\mathbb Z_p)\rightarrow H^i(X,\mathbb Z_p)\rightarrow H^{i-3}(X/G,\mathbb Z_p)\rightarrow H^{i+1}(X/G,\mathbb Z_p)\rightarrow ....$$
 we get that the map $\mu^*_i: H^{i-3}(X/G,\mathbb Z_p)\rightarrow H^{i+1}(X/G,\mathbb Z_p)$ which is cup product with the generator $\omega\in H^4(X/G,\mathbb Z_p)$, is an isomorphism for all $i>n$. Now since the finite dimensional invariant open coverings $\{\mathcal U\}$ of the orbit space $X/G$ are cofinal in the set of all open coverings of $X/G$ (cf. Theorem 2.2), we can write $\check{H}^*(X/G,\mathbb Z_p)= \varinjlim \check{H}^*(K(\mathcal U),\mathbb Z_p)$, where $K(\mathcal U)$ denotes the nerve of $\mathcal U$. Now, let $i_0>n$ and $\nu \in H^{i_0}(X/G,\mathbb Z_p)$ be arbitrary. We can find a finite dimensional invariant open covering $\mathcal V$ of $X/G$ and elements $\nu'\in H^{i_0}(K(\mathcal V),\mathbb Z_p)$ and $\omega' \in H^4(K(\mathcal V),\mathbb Z_p )$ such that $\rho(\nu')=\nu$ and $\rho (\omega ')=\omega$, where $\rho:H^{i_0}(K(\mathcal V),\mathbb Z_p)\rightarrow H^{i_0}(X/G,\mathbb Z_p)$ is the canonical map. Thus, for $k$, such that $4k+i_0>$ dim $\mathcal V$, we have $\omega'^k\nu'=0$ and this gives $(\mu^*_{i_0})^k(\nu)=\omega^k \nu=0$. The map $(\mu^*_{i_0})^k$ being a monomorphism gives, $\nu=0$. Hence, $ H^i(X/G,\mathbb Z_p)=0$ for all $i>n$. \end{proof}

Now, it is well known that the group $\mathbb S^3$ acts freely on homotopy $4n-1$ sphere, $n\in \mathbb N$ and the orbit space of any such free action is a space which is homotopy equivalent to a quaternionic projective space. We generalize this result to finitisitc mod $2$ cohomology spheres of dimension $4n-1$, $n\in \mathbb N$.

\begin{theorem} Let $G=\mathbb S^3$ act freely on a finitistic space $X\sim_2 \mathbb S^m$. Then $m=4n-1$, for some natural number $n$ and $H^*(X/G,\mathbb Z_2)\cong \mathbb Z_2[x]/<x^n>$, where deg $x=4$.\end{theorem}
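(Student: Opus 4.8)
The plan is to analyze the Leray--Serre spectral sequence of the Borel fibration $X \hookrightarrow X_G \xrightarrow{\pi} B_G$ attached to the free $\mathbb{S}^3$-action. Since $\mathbb{S}^3$ is connected, $B_G=\mathbb{HP}^\infty$ is simply connected, so $\pi_1(B_G)$ acts trivially on $H^*(X;\mathbb{Z}_2)$ and the $E_2$-term takes the simple form $E_2^{p,q}=H^p(B_G;\mathbb{Z}_2)\otimes H^q(X;\mathbb{Z}_2)$. Using $H^*(B_G;\mathbb{Z}_2)\cong\mathbb{Z}_2[x]$ with $\deg x=4$ and $H^*(X;\mathbb{Z}_2)\cong H^*(\mathbb{S}^m;\mathbb{Z}_2)$, the $E_2$-page is concentrated in the two rows $q=0$ and $q=m$: the row $q=0$ is a copy of $H^*(B_G)$ with basis $\{1,x,x^2,\dots\}$ in degrees $0,4,8,\dots$, and the row $q=m$ is the same pattern tensored with the fundamental class $a\in H^m(X)$.

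Because only the rows $q=0$ and $q=m$ are nonzero, the only differential that can fail to vanish is $d_{m+1}\colon E_{m+1}^{p,m}\to E_{m+1}^{p+m+1,0}$; all earlier differentials vanish for degree reasons, so $E_{m+1}=E_2$. First I would show that $d_{m+1}\neq 0$. By Proposition 2.6, $H^*(X/G;\mathbb{Z}_2)\cong H^*(X_G;\mathbb{Z}_2)$, and by Lemma 3.1 the left-hand side vanishes above degree $m$. If $d_{m+1}$ were zero the spectral sequence would collapse at $E_2$, forcing $H^{4k}(X_G;\mathbb{Z}_2)\neq 0$ for every $k$, contradicting this vanishing. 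Hence $d_{m+1}(a)\neq 0$ in $H^{m+1}(B_G;\mathbb{Z}_2)$, which must therefore be nonzero; since $H^j(B_G;\mathbb{Z}_2)\neq 0$ only when $4\mid j$, this forces $4\mid(m+1)$, i.e.\ $m=4n-1$ for some $n\in\mathbb{N}$. This settles the first assertion.

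For the ring structure I would compute $E_\infty$ explicitly. With $d_{m+1}(a)=x^n$ (the generator of $H^{4n}(B_G)=H^{m+1}(B_G)$) and the Leibniz rule, $d_{m+1}(x^k a)=x^{k+n}$ for all $k\geq 0$. Thus $d_{m+1}$ is injective on the row $q=m$, killing it entirely at $E_{m+2}$, and its image in the row $q=0$ is the span of $x^k$ for $k\geq n$. Hence $E_{m+2}=E_\infty$ is concentrated in the single row $q=0$, with $E_\infty^{4k,0}=\mathbb{Z}_2$ for $0\leq k\leq n-1$ and zero otherwise. Since $E_\infty$ lives in one row the filtration on $H^*(X_G)$ is trivial, there are no extension problems, and the edge homomorphism $\pi^*\colon H^*(B_G;\mathbb{Z}_2)\to H^*(X_G;\mathbb{Z}_2)$ of Proposition 2.5 is a surjective ring map. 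The class $\pi^*(x)\in H^4(X_G)$ then generates, satisfies $(\pi^*x)^k\neq 0$ for $k<n$ and $(\pi^*x)^n=\pi^*(x^n)=0$, so transporting along $H^*(X/G)\cong H^*(X_G)$ gives $H^*(X/G;\mathbb{Z}_2)\cong\mathbb{Z}_2[x]/\langle x^n\rangle$ with $\deg x=4$.

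The main obstacle is the step forcing $d_{m+1}\neq 0$: everything hinges on knowing that $H^*(X/G;\mathbb{Z}_2)$ is finite dimensional, i.e.\ vanishes above degree $m$, which is exactly the content of Lemma 3.1 and is why that lemma was established first. Once the nonvanishing of the differential is secured, both the numerical constraint $4\mid(m+1)$ and the multiplicative collapse onto the bottom row are formal. The only point requiring care is verifying that the concentration of $E_\infty$ in a single row really does make $\pi^*$ a ring isomorphism modulo its kernel, but this is routine in the absence of extension problems.
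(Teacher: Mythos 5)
Your proposal is correct and follows essentially the same route as the paper: the Leray--Serre spectral sequence of the Borel fibration with its two nonzero rows, nonvanishing of $d_{m+1}$ forced by Lemma 3.1 (finite cohomological dimension of $X/G$), the divisibility constraint $4\mid(m+1)$ from $H^*(B_G;\mathbb Z_2)\cong\mathbb Z_2[x]$, and the multiplicative collapse onto the bottom row identifying $H^*(X/G;\mathbb Z_2)$ with $\mathbb Z_2[x]/\langle x^n\rangle$. The only cosmetic difference is that you establish $d_{m+1}\neq 0$ first and then deduce the congruence, while the paper argues the two points in the opposite order.
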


\begin{proof}Let $G=\mathbb S^3$ act freely on $X$, then the Leray-Serre spectral sequence of the map $\pi:X_G\rightarrow B_G$ must not collapse at $E_2$-term. As $\pi_1(B_G)$ acts trivially on $H^*(X;\mathbb Z_2)$, the fibration $X\hookrightarrow X_G \longrightarrow B_G$ has a simple system of local coefficients on $B_G$. So the spectral sequence has the following form 
$$E_2^{k,l}\cong H^k(B_G;\mathbb Z_2)\otimes H^l(X,\mathbb Z_2)\cong \mathbb Z_2\otimes \mathbb Z_2\cong \mathbb Z_2\;\mbox{for}\; k=0,4,8,\cdots\;\text{and}\ \ l=0,m.$$
It is obvious that the differentials $d_r:E_r^{k,l}\rightarrow E_r^{k+r,l-r+1}$ are trivial for $r\leq m$. If $4$ does not divide $m+1$ then the differential $d_{m+1}:E_{m+1}^{k,l}\rightarrow E_{m+1}^{k+m+1,l-m}$ collapse at $E_2$-term, which contradicts our hypothesis using previous lemma. Thus, $m=4n-1$ for some $n\in \mathbb N$. 

Now let $a\in H^{4n-1}(X;\mathbb Z_2)$ be the generator of the cohomology ring $H^*(X;\mathbb Z_2)$ and $t\in H^4(B_G;\mathbb Z_2)$ be the generator of the cohomology ring $H^*(B_G;\mathbb Z_2)$. Then $1\otimes a$ is a generator of $E_2^{0,4n-1}$. If the differential
\begin{eqnarray*}d_{4n}:E^{0,4n-1}_{4n}\rightarrow E^{4n,0}_{4n}
\end{eqnarray*}
is the trivial homomorphism then multiplicative structure of the spectral sequence gives that $d_r=0$ for all $r$. Thus, the spectral sequence degenerates and there are fixed points of $G$ on $X$. Therefore, we must have $d_{4n}(1\otimes a)=t^n\otimes 1$. Consequently, the differentials $d_{4n}:E_{4n}^{k,4n-1}\rightarrow E_{4n}^{k+4n,0}$ are isomorphisms for all $k$. Thus, we obtain
\begin{eqnarray*}
E_{4n+1}^{k,l} =\begin{cases} \mathbb{Z}_2 & \text{for $k=4i(0\leq i\leq n-1)$ and $l=0$},\\
0 & \text{otherwise}.\end{cases}
\end{eqnarray*}
Now, the differentials $d_r:E_r^{k,l}\rightarrow E_r^{k+r,l-r+1}$ are the trivial homomorphisms for $m>4n$. Thus, $E_{4n+1}^{k,l}=E_\infty^{k,l}$ for all $k$ and $l$. It follows that $H^*(X_G,\mathbb Z_2)$ and Tot $E_\infty^{*,*}$ are same as the graded commutative algebra. The element $t\otimes 1\in E_2^{4,0}$ is a permanent cocycle and determines an element $x\in E_\infty^{4,0}=H^4(X_G;\mathbb Z_2)$. We have $\pi^*(t)=x$ and $x^n=0$. Therefore, the total complex Tot$E_\infty^{*,*}$ is the graded commutative algebra $$\mbox{Tot}\ E_\infty^{*,*}=\mathbb Z_2[x]/<x^n>,\;\;\mbox{where}\;\; \mbox{deg}x=4.$$ Since the action on $G$ is free, so we have $H^*(X_G,\mathbb Z_2)\cong H^*(X/G,\mathbb Z_2)$. Thus, $H^*(X/G,\mathbb Z_2)\cong \mathbb Z_2[x]/<x^n>\;\;\mbox{where},\;\; \mbox{deg}\;x=4$.
\end{proof}
\begin{remark}It is clear that the mod 2 cohomology algebra of the orbit space of a free action of $\mathbb{S}^3$ on a cohomology sphere $\mathbb{S}^{4n-1}$ is that of a cohomology quaternionic projective space $\mathbb{H}P^{n-1}$. The significance of the above result lies in the fact that the orbit spaces of different free $\mathbb{S}^3$ actions on a space may have different cohomology algebras. For example, one has two free $\mathbb{S}^3$ actions on $\mathbb{S}^7\times\mathbb{S}^{11}$ such that orbit spaces are $\mathbb{H}P^1\times\mathbb{S}^{11}$ and $\mathbb{S}^7\times \mathbb{H}P^2$.\end{remark}

Now, we define the mod 2 index of a finitistic space $X$ as follows:

\begin{definition} Let the group $G=\mathbb S^3$ act freely on a finitistic space $X\sim_2 \mathbb S^{4n-1}$, $n\in \mathbb N$. The mod $2$ index of this action is defined to be the largest integer $n$ such that $\alpha^n$ is non-zero, where $\alpha \in H^4(X/G,\mathbb Z_p)$ is the non-zero characteristic class of the $G$-bundle $G\hookrightarrow X\rightarrow X/G$. \end{definition}

\begin{theorem} Let $G=\mathbb{S}^3$ act freely on a finitistic space $X\sim_2 \mathbb S^{4n-1} $, $n>1$. Then the mod $2$ index of $X$ is $n-1$. \end{theorem}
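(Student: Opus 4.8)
The plan is to read off the index directly from the cohomology algebra of the orbit space, which Theorem 3.2 has already computed. Since $n>1$ and $G=\mathbb S^3$ acts freely on $X\sim_2\mathbb S^{4n-1}$, Theorem 3.2 gives
$$H^*(X/G,\mathbb Z_2)\cong \mathbb Z_2[x]/\langle x^n\rangle,\qquad \deg x=4.$$
In particular $H^4(X/G,\mathbb Z_2)$ is one-dimensional over $\mathbb Z_2$, spanned by $x$, and so it contains a unique nonzero element, namely $x$ itself.

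First I would identify the characteristic class $\alpha$ with this generator $x$. Because the action is free, $X\to X/G$ is a principal $\mathbb S^3$-bundle with a classifying map $h:X/G\to B_G$, and its characteristic class is $\alpha=h^*(t)$, where $t$ generates $H^4(B_G,\mathbb Z_2)$. Under the homotopy equivalence $X/G\simeq X_G$ supplied by Proposition 2.5, the map $h$ corresponds to the Borel projection $\pi$, so $h^*(t)$ corresponds to $\pi^*(t)$; and the proof of Theorem 3.2 exhibited exactly this element as the permanent cocycle $x=\pi^*(t)$ generating $H^4(X_G,\mathbb Z_2)$. Hence $\alpha=x$ under the identification $H^*(X/G)\cong H^*(X_G)$. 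Alternatively, one simply observes that $\alpha$ is by hypothesis the nonzero characteristic class and that the one-dimensional space $H^4(X/G,\mathbb Z_2)$ has $x$ as its only nonzero element, forcing $\alpha=x$ with no further argument.

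The remaining step is a direct computation inside the truncated polynomial ring. Since $\alpha=x$ we have $\alpha^k=x^k$, and in $\mathbb Z_2[x]/\langle x^n\rangle$ one has $x^k\neq 0$ for $0\leq k\leq n-1$ while $x^n=0$. Therefore the largest integer $k$ with $\alpha^k\neq 0$ equals $n-1$, which is by definition the mod $2$ index of the action. There is essentially no analytic obstacle here: all the substantive work, namely forcing $m=4n-1$ and computing the ring structure of $H^*(X/G,\mathbb Z_2)$, was already carried out in Theorem 3.2. The only point deserving a moment's care is the identification $\alpha=x$, i.e. confirming that the topologically defined characteristic class coincides with the algebraic generator produced by the spectral sequence; but once $H^4(X/G,\mathbb Z_2)$ is known to be one-dimensional this is automatic.
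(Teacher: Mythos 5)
Your proposal is correct and its overall skeleton matches the paper's: both reduce the theorem to the ring computation $H^*(X/G,\mathbb Z_2)\cong\mathbb Z_2[x]/\langle x^n\rangle$ from Theorem 3.2 and then read off that $x^{n-1}\neq 0$ while $x^n=0$. The genuine difference is in how the characteristic class $\alpha$ is shown to be the nonzero generator of $H^4(X/G,\mathbb Z_2)$. The paper writes down the Gysin sequence of the bundle $\mathbb S^3\hookrightarrow X\to X/G$ and uses exactness together with $H^3(X,\mathbb Z_2)=H^4(X,\mathbb Z_2)=0$ (this is where $n>1$ enters) to conclude that $\mu^*:H^0(X/G,\mathbb Z_2)\to H^4(X/G,\mathbb Z_2)$ is an isomorphism, so $\alpha=\mu^*(1)\neq 0$. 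You instead identify $\alpha$ with $h^*(t)=\pi^*(t)=x$, the permanent cocycle produced in the proof of Theorem 3.2; this is the edge-homomorphism description of Proposition 2.4 and is perfectly valid (it implicitly uses the naturality of the Gysin sequence, or equivalently the standard fact that the characteristic class is pulled back from the universal bundle), and it too gives nonvanishing since $E_\infty^{4,0}\cong\mathbb Z_2$ for $n>1$. The paper's route has the small advantage of staying entirely inside the Gysin sequence used to \emph{define} $\alpha$, so no compatibility between the two descriptions needs to be invoked. One caution: your ``alternative'' shortcut --- appealing to the phrase ``non-zero characteristic class'' in Definition 3.4 to conclude $\alpha=x$ without argument --- is circular, since the nonvanishing of $\alpha$ is precisely the point that requires proof (and it genuinely fails for $n=1$, where $H^4(X/G,\mathbb Z_2)=0$); your primary identification via $\pi^*(t)$ is the one to keep.
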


\begin{proof} The Gysin sequence of the bundle $\mathbb S^3\hookrightarrow X \stackrel{\pi}{\rightarrow}X/G$ begins as $$0\rightarrow H^3(X/G,\mathbb Z_2)\stackrel{\pi^*}{\rightarrow} H^3(X,\mathbb Z_2)\rightarrow H^0(X/G,\mathbb Z_2)\stackrel{\mu^*}{\rightarrow}H^4(X/G,\mathbb Z_2)\rightarrow H^4(X,\mathbb Z_2)\rightarrow \cdots$$ The characteristic class of the bundle is defined to be the element $\mu^*(1)\in H^4(X/G,\mathbb Z_2)$, where $1$ is the unity of $H^0(X/G,\mathbb Z_2)$. In the Theorem 3.2,  $H^i(X/G,\mathbb Z_2)\cong Z_2$ for $i=4j$, $0\leq j \leq n-1$. Also, we have $H^i(X,\mathbb Z_2)\cong \mathbb Z_2$ for $i=0,4n-1$. Thus, $\mu^*: H^0(X/G,\mathbb Z_2)\rightarrow H^4(X/G,\mathbb Z_2)$ is an isomorphism. So, the characteristic class $\mu^*(1)=\alpha\in H^4(X/G,\mathbb Z_2)$ is non-zero. Thus by the Theorem 3.2, $\alpha^n=0$ but $\alpha ^{n-1}\neq 0$. Thus, mod $2$ index of $X$ is $n-1$.
\end{proof}

We now deduce Borsuk-Ulam type result from Theorems 2.6 and 3.2 for free $G=\mathbb{S}^3$ actions. Let $G$ act freely on $\mathbb S^{4k-1}$ and on a path connected finitistic space $Y\sim_2 \mathbb S^{4n-1}$. We note that if $k>n>1$ then by choosing $m=4n-1$ all the conditions of the Theorem 2.6 are satisfied and thus we are able to conclude the following:
\begin{corollary} Let $G=\mathbb{S}^3$ act freely on $\mathbb{S}^{4k-1}$ and on a path connected finitisitc space $Y\sim_2 \mathbb{S}^{4n-1}$. If $k-1>$ mod 2 index of $Y$ then there is no $G$-equivariant map $f:\mathbb{S}^{4k-1}\rightarrow Y$. \end{corollary}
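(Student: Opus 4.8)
The plan is to apply the Borsuk--Ulam type result of Theorem 2.6 directly, taking $R=\mathbb Z_2$, $X=\mathbb S^{4k-1}$, and the given space $Y\sim_2\mathbb S^{4n-1}$ (with $n>1$, so that the mod $2$ index is the one furnished by Theorem 3.4). The substance of the argument is not a new computation but the correct choice of the integer $m$ together with a verification of the three hypotheses of Theorem 2.6; everything will hinge on translating the index hypothesis into a strict inequality between dimensions.

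First I would invoke Theorem 3.4: since $Y\sim_2\mathbb S^{4n-1}$ with $n>1$, the mod $2$ index of the free $G$-action on $Y$ equals $n-1$. Hence the standing assumption that $k-1$ exceeds the mod $2$ index of $Y$ is exactly the inequality $k>n$. I would then fix $m=4n-1$ and carry $k>n$ through each of the remaining checks.

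Next I would verify the three conditions. For the vanishing of $H^q(\mathbb S^{4k-1};\mathbb Z_2)$ in the range $0<q<m$: the reduced $\mathbb Z_2$-cohomology of $\mathbb S^{4k-1}$ is concentrated in degree $4k-1$, and $k>n$ forces $4k-1>4n-1=m$, so no nonzero class lies in $(0,m)$. For the vanishing $H^{m+1}(Y/G;\mathbb Z_2)=0$: by Theorem 3.2 the orbit space satisfies $H^*(Y/G;\mathbb Z_2)\cong\mathbb Z_2[x]/<x^n>$ with $\deg x=4$, whence $H^{4n}(Y/G;\mathbb Z_2)=0$ because $x^n=0$. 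For the Betti number inequality: again $4k-1>4n-1$ gives $\beta_m(\mathbb S^{4k-1};\mathbb Z_2)=\beta_{4n-1}(\mathbb S^{4k-1};\mathbb Z_2)=0$, while $B_G=\mathbb{H}P^{\infty}$ has $H^*(B_G;\mathbb Z_2)\cong\mathbb Z_2[x]$ with $\deg x=4$, so $\beta_{m+1}(B_G;\mathbb Z_2)=\beta_{4n}(B_G;\mathbb Z_2)=1$, and therefore $\beta_m(\mathbb S^{4k-1};\mathbb Z_2)=0<1=\beta_{m+1}(B_G;\mathbb Z_2)$.

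With all three hypotheses in place, Theorem 2.6 immediately yields the nonexistence of a $G$-equivariant map $f:\mathbb S^{4k-1}\to Y$, which is the assertion of the corollary. I expect the only delicate point to be the bookkeeping in the last check: one must ensure simultaneously that the degree $m+1=4n$ is a multiple of $4$ (so that it meets a nonzero class of $\mathbb{H}P^{\infty}$) and that $m=4n-1$ falls strictly below the top degree $4k-1$ of the source sphere (so that the source contributes nothing there). Both are guaranteed precisely by the strict inequality $k>n$ extracted from the index hypothesis via Theorem 3.4, so the proof reduces to making that single inequality do all the work.
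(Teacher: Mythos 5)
Your proposal is correct and follows exactly the route the paper intends: the paper's own justification is the one-line remark that for $k>n>1$, choosing $m=4n-1$ makes all hypotheses of Theorem 2.6 hold, which is precisely what you verify (using Theorem 3.4 to translate the index hypothesis into $k>n$ and Theorem 3.2 to kill $H^{4n}(Y/G;\mathbb Z_2)$). You have simply written out the checks the paper leaves implicit.
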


For $G=\mathbb{S}^3$ actions on a mod 2 cohomology real projective space, we obtain

\begin{theorem} Let $X\sim_2\mathbb RP^n$, $n\in \mathbb N$ be a finitistic space. Then the group $G=\mathbb{S}^3$ cannot act freely on $X$.  \end{theorem}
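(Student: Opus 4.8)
The plan is to assume, for contradiction, that $G=\mathbb{S}^3$ acts freely on $X\sim_2\mathbb{RP}^n$ and to run the Leray--Serre spectral sequence of the Borel fibration $X\hookrightarrow X_G\to B_G$ exactly as in Theorem 3.2, but now exploiting the low degree of the fibre generator. Since $B_G=\mathbb{HP}^\infty$ is simply connected, $\pi_1(B_G)$ acts trivially on $H^*(X;\mathbb{Z}_2)$, so the system of local coefficients is simple and $E_2^{k,l}\cong H^k(B_G;\mathbb{Z}_2)\otimes H^l(X;\mathbb{Z}_2)$. Writing $H^*(X;\mathbb{Z}_2)\cong\mathbb{Z}_2[x]/\langle x^{n+1}\rangle$ with $\deg x=1$ and $H^*(B_G;\mathbb{Z}_2)\cong\mathbb{Z}_2[t]$ with $\deg t=4$, the $E_2$-page is the bigraded algebra $\mathbb{Z}_2[t]\otimes\mathbb{Z}_2[x]/\langle x^{n+1}\rangle$, generated by $t\otimes1$ in bidegree $(4,0)$ and $1\otimes x$ in bidegree $(0,1)$.

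The key step is to observe that both algebra generators are permanent cocycles, purely for degree reasons. The differential $d_r$ carries $E_r^{k,l}$ to $E_r^{k+r,l-r+1}$, so $d_r(1\otimes x)$ lies in $E_r^{r,2-r}$. For $r\ge 3$ the fibre degree $2-r$ is negative, and for $r=2$ the target is $E_2^{2,0}=H^2(B_G;\mathbb{Z}_2)\otimes H^0(X;\mathbb{Z}_2)=0$ because $H^2(B_G;\mathbb{Z}_2)=0$; hence $d_r(1\otimes x)=0$ for all $r$. Likewise $d_r(t\otimes1)$ lands in $E_r^{4+r,1-r}$, which has negative fibre degree for every $r\ge 2$, so $d_r(t\otimes1)=0$. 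Since $d_r$ is a derivation and these two classes generate the $E_2$-algebra, every differential vanishes and the spectral sequence collapses at $E_2$, giving $E_\infty=E_2$.

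I would then derive the contradiction from Lemma 3.1. Collapse at $E_2$ forces $E_\infty^{4m,0}=H^{4m}(B_G;\mathbb{Z}_2)\otimes H^0(X;\mathbb{Z}_2)\cong\mathbb{Z}_2\ne 0$ for every $m\ge 0$, so $H^*(X_G;\mathbb{Z}_2)$ is nonzero in arbitrarily high degrees. On the other hand, $H^i(X;\mathbb{Z}_2)=0$ for $i>n$, so Lemma 3.1 gives $H^i(X/G;\mathbb{Z}_2)=0$ for all $i>n$, and by Proposition 2.5 (freeness) $H^*(X/G;\mathbb{Z}_2)\cong H^*(X_G;\mathbb{Z}_2)$. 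Choosing $m$ with $4m>n$ yields $0\ne H^{4m}(X_G;\mathbb{Z}_2)\cong H^{4m}(X/G;\mathbb{Z}_2)=0$, a contradiction. Hence no such free action exists.

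Conceptually the proof is the mirror image of Theorem 3.2: there the single fibre generator sits in degree $4n-1$, leaving exactly enough room for the transgression $d_{4n}$ to hit $t^n$, whereas here the fibre generator sits in degree $1$, far below the first positive class of $B_G$ in degree $4$, so no transgression is available to truncate the polynomial tower $\mathbb{Z}_2[t]$. The only point requiring care is the multiplicative bookkeeping, namely verifying that the two generators genuinely support no differential and then invoking the derivation property to force global collapse; everything else is degree counting together with the finiteness supplied by Lemma 3.1.
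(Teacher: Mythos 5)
Your proposal is correct and follows essentially the same route as the paper: the Leray--Serre spectral sequence of the Borel fibration collapses at $E_2$ by the derivation property together with degree counting, and this contradicts Lemma 3.1 since $H^*(X_G)\cong H^*(X/G)$ would then be nonzero in arbitrarily high degrees. The only cosmetic difference is that you verify directly that the two algebra generators are permanent cocycles, whereas the paper isolates the potential differential $d_4$ on $E_4^{0,3}$ and kills it with the same Leibniz-rule computation before invoking multiplicativity.
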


\begin{proof} Suppose on the contrary, the group $G=\mathbb S^3$ acts freely on $X$. Then the Leray-Serre spectral sequence associated to the fibration $\pi:X_G\rightarrow B_G$ must not collapse at $E_2$-term. As $\pi_1(B_G)$ acts trivially on $H^*(X;\mathbb Z_2)$, so the spectral sequence has the following form \begin{eqnarray*}
E_2^{k,l} &\cong& H^k (B_G;\mathbb Z_2) \otimes H^l (X;\mathbb Z_2)\\[-3pt]
&\cong& \mathbb{Z}_2 \otimes \mathbb{Z}_2 \hskip.65in\ \text{for} \ \ k=0,4,8...\ \ \text{and} \ \ l=0,1,2,...,n\\[-3pt]
&\cong& \mathbb{Z}_2 \hskip1in \ \text{for} \ \ k=0,4,8...\ \ \text{and} \ \ l=0,1,2,...,n.
\end{eqnarray*}
Let $a\in H^1(X;\mathbb Z_2)$ be a generator of $H^*(X;\mathbb Z_2)$ and $t\in H^4(B_G,\mathbb Z_2)$ is a generator of $H^*(B_G,\mathbb Z_2)$. Since, by our hypothesis, $G$ has a free action on $X$ so some of the differentials $d_r:E_r^{k,l}\rightarrow E_r^{k+r,l-r+1}$ must be non-trivial. Clearly both $d_2$ and $d_3$ are trivial. If $d_4(1\otimes a^3)=t\otimes 1$, then $$t\otimes 1=d_4((1\otimes a)(1\otimes a^2))=d_4(1\otimes a).(1\otimes a^2)+(-1)^1(1\otimes a).d_4(1\otimes a^2)=0,$$ which is not possible. Therefore, the differential $d_4:E_4^{0,3}\rightarrow E_4^{4,0}$ must be the trivial homomorphism and the multiplication structure of the spectral sequence implies that $E_2=E_\infty$. Hence the spectral sequence degenerates and we get a contradiction using Lemma 3.1. Thus $G$ cannot act freely on $X$.  
\end{proof}

We prove analogous result for $G=\mathbb{S}^3$ actions on a mod $p$ cohomology lens space, where $p$ is an odd prime.

\begin{theorem} Let $X\sim_p L^{2m-1}(p,q)$ be finitistic space where $p$ an odd prime. Then $G=\mathbb{S}^3$ cannot act freely on $X$. \end{theorem}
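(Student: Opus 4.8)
The plan is to argue by contradiction, in direct analogy with Theorem 3.5, exploiting that the base $B_G=\mathbb{HP}^\infty$ has $\mathbb Z_p$-cohomology concentrated in degrees divisible by $4$, while the fibre cohomology $H^*(X;\mathbb Z_p)\cong\mathbb Z_p[x,z]/\langle x^2,z^m\rangle$ is generated by elements of degrees $1$ and $2$. First I would suppose $G=\mathbb S^3$ acts freely on $X$ and set up the Leray--Serre spectral sequence of the Borel fibration $X\hookrightarrow X_G\longrightarrow B_G$. Since $B_G$ is simply connected, $\pi_1(B_G)$ acts trivially on $H^*(X;\mathbb Z_p)$, so the system of local coefficients is simple and $E_2^{k,l}\cong H^k(B_G;\mathbb Z_p)\otimes H^l(X;\mathbb Z_p)$. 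This is a copy of $\mathbb Z_p$ precisely when $k\in\{0,4,8,\dots\}$ and $0\le l\le 2m-1$, and vanishes otherwise.

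Next I would let $a\in H^1(X;\mathbb Z_p)$ and $b\in H^2(X;\mathbb Z_p)$ be the generators corresponding to $x$ and $z$, so that $1\otimes a$ and $1\otimes b$ are the multiplicative generators of the fibre algebra, together with the base class $t\otimes 1\in E_2^{4,0}$. The key observation is that all three are permanent cocycles for purely dimensional reasons. Indeed $d_r$ carries $E_r^{0,1}$ into $E_r^{r,2-r}$ and $E_r^{0,2}$ into $E_r^{r,3-r}$, and for every $r\ge 2$ the target group is zero: either the fibre degree $2-r$ or $3-r$ is negative, or the base degree $r$ is not divisible by $4$. Likewise $d_r(t\otimes 1)$ lands in $E_r^{4+r,1-r}$, which vanishes for $r\ge 2$. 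Since each $d_r$ is a derivation and $t$, $a$, $b$ generate $E_r^{*,*}$ as an algebra, every differential vanishes identically, and the spectral sequence collapses at the $E_2$-term.

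Finally I would extract the contradiction exactly as in the earlier theorems. Collapse gives $E_\infty^{4k,0}=E_2^{4k,0}\cong\mathbb Z_p\neq 0$ for every $k\ge 0$, so $H^{4k}(X_G;\mathbb Z_p)\neq 0$ in arbitrarily high degrees. By Proposition 2.5, $H^*(X_G;\mathbb Z_p)\cong H^*(X/G;\mathbb Z_p)$, whence $H^i(X/G;\mathbb Z_p)\neq 0$ for arbitrarily large $i$. On the other hand $H^i(X;\mathbb Z_p)=0$ for all $i>2m-1$, so Lemma 3.1 forces $H^i(X/G;\mathbb Z_p)=0$ for all $i>2m-1$. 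This contradiction shows that $G=\mathbb S^3$ cannot act freely on $X$.

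The only genuinely new point relative to the real-projective case is that here the fibre has \emph{two} multiplicative generators rather than one, so the degree bookkeeping must be carried out for both $a$ and $b$; I expect this verification of the vanishing of the relevant targets $E_r^{r,2-r}$ and $E_r^{r,3-r}$ to be the main (though mild) obstacle. Once it is in place, the collapse of the spectral sequence and the appeal to Lemma 3.1 are routine and mirror the proof of Theorem 3.6.
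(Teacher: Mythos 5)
Your proposal is correct and follows essentially the same route as the paper: set up the Leray--Serre spectral sequence of the Borel fibration, observe that the multiplicative generators $1\otimes a$, $1\otimes b$ (and $t\otimes 1$) must be permanent cocycles for degree reasons since $H^*(B_G;\mathbb Z_p)$ is concentrated in degrees divisible by $4$, conclude via the derivation property that the spectral sequence collapses at $E_2$, and contradict Lemma 3.1. The paper phrases the key step as showing $d_4(1\otimes ab)$ cannot equal $t\otimes 1$ because $d_4(1\otimes a)=d_4(1\otimes b)=0$, which is exactly the dimensional bookkeeping you carry out slightly more systematically for all $r$.
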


\begin{proof} Suppose that $G=\mathbb{S}^3$ acts freely on $X$. Consider the Leray-Serre spectral sequence associated to the fibration $\pi:X_G\rightarrow B_G$. Since $\pi_1(B_G)$ acts trivially on $H^*(X;\mathbb{Z}_p)$ and so the fibration $X\overset{i}\hookrightarrow X_G\overset{\pi}{\rightarrow}B_G$ has a system of local coefficients on $B_G$. Therefore, the spectral sequence has
\begin{eqnarray*}
E_2^{k,l} &\cong& H^k (B_G;\mathbb Z_p) \otimes H^l (X;\mathbb Z_p)\cong \mathbb{Z}_p\ \text(for)\ k=0,4,8...\ \text{and} \ \ l=0,1,2,...,2m-1.
\end{eqnarray*}
Since $G=\mathbb{S}^3$ acts freely on $X$ then the spectral sequence does not collapse at $E_2$-term. Let $a\in H^1(X;\mathbb{Z}_p)$ and $b\in H^2(X;\mathbb{Z}_p)$ be generators of cohomology ring $H^*(X;\mathbb{Z}_p)$. Clearly, $d_2=0$ and $d_3=0$. If $d_4(1\otimes ab)=t\otimes 1$ where $t\in H^4(B_G;\mathbb{Z}_p)$ be the generator of the cohomology ring $H^*(B_G;\mathbb{Z}_p)$, then
 \begin{eqnarray*}
t\otimes 1=d_4((1\otimes b)(1\otimes a))=d_4(1\otimes b).(1\otimes a)+(-1)^2(1\otimes b).d_4(1\otimes a)=0,
\end{eqnarray*}
which is not possible. Therefore, the differential $d_4: E_4^{0,3} \to E_4^{4,0}$ must be trivial. Consequently, the spectral sequence degenerates and we get a contradiction using lemma 3.1. Thus $G$ cannot act freely on $X$. 
\end{proof}

\begin{remark}The above theorem also holds for the spaces whose mod $p$ cohomology is isomorphic to that of $\mathbb{S}^1\times\mathbb{C}P^{m-1}$.\end{remark}

\section{Remarks on action of $SO(3)$} One of the classical result in an attempt to solve the topological spherical space form problem is that if a finite group $G$ acts freely on some sphere $\mathbb S^n$, $n$ odd then the group $G$ must have periodic cohomology i.e $H^{n+1}(G,\mathbb Z)\cong \mathbb Z/|G|$ (Here H is Tate cohomology). This condition is equivalent to the condition that every abelian subgroup of $G$ must be cyclic. Also, by Lefschetz fixed point theorem it is well known that only finite group which can act freely on $\mathbb S^n$, $n$ even is the cyclic group $\mathbb Z_2$. Thus, the group $\mathbb Z_p\oplus \mathbb Z_p$ cannot act freely on sphere of any dimension. The above results can be generalized and we have, \\
\textbf{4.1.} The group $\mathbb Z_p\oplus \mathbb Z_p$ cannot act freely on any finite (dimensional) CW-complex which is homotopy equivalent to a $n$-sphere $\mathbb S^n$. \\
The group $SO(3)$ contains $\mathbb Z_2\oplus\mathbb Z_2$ as a subgroup, therefore by theorem above, we get\\
\textbf{4.2.} The group $SO(3)$ cannot act freely on any finite (dimensional) CW-complex homotopy equivalent to a sphere $\mathbb S^n$, $n\in \mathbb N$.\\

Now, we consider the question of existence of free action of $SO(3)$ on real projective spaces. The concept of \textit{free $p$-rank} of symmetry of a finite CW-complex $X$ for a prime $p$, introduced in \cite{Brow}, is defined as the rank of the largest elementary abelian $p$-group which acts freely on $X$. In \cite{Adem}, A. Adem et al. determined the free $2$-rank of symmetry of finite product of real projective spaces. \\ 
\textbf{4.3.} If $X=\mathbb (RP^n)^k$ and $\phi(X)$ denotes its free $2$-rank of symmetry then \[
\phi(X)=
     \begin{cases}
	0 & n\equiv 0,2 \;\mbox{mod}\;4\\k & n \equiv 1 \;\mbox{mod}\;4\\2k & n\equiv 3\;\mbox{mod}\;4
     \end{cases}
\]

From, this result we are able to conclude the following,
\begin{theorem} The group $SO(3)$ cannot act freely on $X=\mathbb RP^n$ when $n\equiv 0,1,2 \;\mbox{mod}\;4$\end{theorem}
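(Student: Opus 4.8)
The plan is to deduce the theorem from the free $2$-rank computation recorded in 4.3, using the inclusion $\mathbb{Z}_2\oplus\mathbb{Z}_2\subset SO(3)$ already noted before 4.2. Explicitly, the diagonal matrices $I,\ \mathrm{diag}(1,-1,-1),\ \mathrm{diag}(-1,1,-1),\ \mathrm{diag}(-1,-1,1)$, each of determinant $+1$, constitute a copy of the Klein four-group inside $SO(3)$, that is, an elementary abelian $2$-group of rank $2$.

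First I would assume, for contradiction, that $SO(3)$ acts freely on $X=\mathbb{RP}^n$ and then restrict this action to the subgroup $\mathbb{Z}_2\oplus\mathbb{Z}_2$. The restricted action is again free: for any $x\in X$ the isotropy group of the restricted action is $(\mathbb{Z}_2\oplus\mathbb{Z}_2)\cap SO(3)_x=(\mathbb{Z}_2\oplus\mathbb{Z}_2)\cap\{e\}=\{e\}$. Since $\mathbb{RP}^n$ is a finite CW-complex, the hypotheses under which 4.3 is stated are met with $k=1$.

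Because a rank-$2$ elementary abelian group acts freely on $\mathbb{RP}^n$, the definition of the free $2$-rank of symmetry forces $\phi(\mathbb{RP}^n)\geq 2$. On the other hand, taking $k=1$ in 4.3 gives $\phi(\mathbb{RP}^n)=0$ for $n\equiv 0,2\pmod{4}$ and $\phi(\mathbb{RP}^n)=1$ for $n\equiv 1\pmod{4}$, so in each of these three residue classes $\phi(\mathbb{RP}^n)\leq 1<2$. This contradiction shows that no free $SO(3)$-action on $\mathbb{RP}^n$ can exist when $n\equiv 0,1,2\pmod{4}$.

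Since the statement is essentially a direct corollary of 4.3 together with the subgroup inclusion, I do not expect a substantial analytic obstacle. The only points that deserve care are the (routine) verification that the restriction of a free action to a subgroup remains free rather than merely semi-free, the correct bookkeeping of the free $2$-rank values at $k=1$, and the observation that $\mathbb{RP}^n$ satisfies the finiteness hypothesis under which 4.3 is available.
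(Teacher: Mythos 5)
Your proposal is correct and follows exactly the route the paper intends: the paper derives this theorem directly from the free $2$-rank computation in 4.3 together with the inclusion $\mathbb Z_2\oplus\mathbb Z_2\subset SO(3)$ noted before 4.2, which is precisely your argument (the paper simply leaves the restriction-of-action and $k=1$ bookkeeping implicit). No discrepancies.
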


We remark that the above theorem is infact valid for finite CW complexes homotopic to $\mathbb RP^n$, $n\equiv 0,1,2 \;\mbox{mod}\;4$. However, the group $SO(3)$ can act freely on spaces homotopic to $\mathbb RP^n$, when $n=4k-1$, $k\in \mathbb N$. 
\begin{example} Let $n\in \mathbb N$. \\ The function $\phi:\mathbb S^3\times \mathbb S^{4n-1}\rightarrow \mathbb S^{4n-1}$ as $$\theta (z,(w_1,w_2,w_3\cdots, w_{n}))=(zw_1,zw_2,zw_3,\cdots ,zw_{n})$$ defines free continuous action of the group $\mathbb S^3$ on $\mathbb S^{4n-1}$. The map $\phi$ induces a free action $\bar{\phi}$ of the group $\mathbb S^3/\mathbb Z_2\cong SO(3)$ on $\mathbb RP^{4n-1}$. 
\end{example}
Note that the orbit space $\mathbb RP^{4n-1}/SO(3)$ of $\bar{\phi}$ is the $(n-1)$-dimensional quaternionic projective space $\mathbb HP^{n-1}$.

 Using the classical result ( 4.2) and some results on lifting of group actions we will conclude shortly that the group $SO(3)$ cannot freely on lens space $L^{2n-1}(p,q)$, $p$ odd prime. Now, suppose a connected Lie group $G$ acts on a connected manifold (or more generally a CW complex) $M$. Then an action of group $G$ on $M$ can be lifted to an action of the universal cover group $\tilde{G}$ on any cover $N$ of $M$. But we may not always be able to lift the action of $G$ on $M$ to an action of the group $G$ itself on $N$. However, under certain conditions this becomes possible. One of the results in this direction is as follows.\\
 \textbf{4.4.}\cite{Got} is  Let $G$ be a compact Lie group and $M$ be a connected manifold (or CW-complex) having same homotopy type of a compact polyhedra, and  $\chi(M)\neq 0$ (Euler-characteristic). Then the $G$-action on $M$ lifts to a $G$-action on any cover of $M$.\\
The proof of the above result can also be found in \cite{Mont}. We note that in 4.4, if $G$-action on $M$ is free then the lifted action of $G$ on $N$ must also be free. Now, let the group $G=SO(3)$ act freely on a space $X=L^{2n-1}(p,q)$, $p$ an odd prime. Then by the above discussion, a free action of $G$ on $X$ lifts to a free action of $G$ on a universal covering space $\tilde{X}=S^{2n-1}$ of $X$. This contradicts 4.2. Thus, we conclude \begin{theorem} The group $G=SO(3)$ cannot act freely on $L^{2n-1}(p,q)$, $p$ an odd prime.\end{theorem}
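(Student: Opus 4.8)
The plan is to argue by contradiction and reduce a hypothetical free $SO(3)$-action on the lens space to a free $SO(3)$-action on a sphere, which is forbidden by (4.2). So suppose $G=SO(3)$ acts freely on $X=L^{2n-1}(p,q)$ with $p$ an odd prime. Recall that $X$ is a closed connected $(2n-1)$-manifold with $\pi_1(X)\cong\mathbb Z_p$, whose universal covering space is $\mathbb S^{2n-1}$; the covering projection $c:\mathbb S^{2n-1}\to X$ is precisely the quotient by the free linear $\mathbb Z_p$-action that defines the lens space.

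The core of the argument is to lift the $G$-action on $X$ to a free $G$-action on $\mathbb S^{2n-1}$. First I would use the lifting machinery recalled before (4.4): the action of the universal cover $\widetilde G=\mathbb S^{3}$ always lifts to $\mathbb S^{2n-1}$, and the question is whether the $G$-action itself descends to a genuine $SO(3)$-action on the cover. The decisive observation is that the obstruction is governed by the image on fundamental groups of an orbit map $\mu_{x_0}:G\to X$, $g\mapsto g\cdot x_0$. Since $\pi_1(SO(3))\cong\mathbb Z_2$ while $\pi_1(X)\cong\mathbb Z_p$ with $p$ odd, every homomorphism $\mathbb Z_2\to\mathbb Z_p$ is trivial because $\gcd(2,p)=1$; hence $(\mu_{x_0})_\ast=0$ and the covering-space lifting criterion is met. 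This produces a lift $\tilde\theta:G\times\mathbb S^{2n-1}\to\mathbb S^{2n-1}$ of the action map, and the uniqueness of lifts (two lifts of a map agreeing at one point coincide) upgrades $\tilde\theta$ to an honest $G$-action.

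I would then verify that $\tilde\theta$ is free: if $\tilde\theta(g,\tilde x)=\tilde x$ for some $g\neq e$, then applying $c$ gives $g\cdot c(\tilde x)=c(\tilde x)$, contradicting freeness of the $G$-action on $X$. Thus $SO(3)$ would act freely on $\mathbb S^{2n-1}$, contradicting (4.2), and the theorem follows.

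The step I expect to be the main obstacle is exactly this lifting, both in producing a genuine group action and in confirming that its hypotheses hold for the lens space. I would flag one caution: $L^{2n-1}(p,q)$ is a closed odd-dimensional manifold, so $\chi(L^{2n-1}(p,q))=0$, and therefore the Euler-characteristic hypothesis of (4.4) is not met directly; the lift must instead be justified through the $\pi_1$-obstruction computation above, in which the oddness of $p$ is used essentially. Equivalently, one may lift the $\mathbb S^3$-action and observe that the central element $-1\in\mathbb S^3$ acts as a deck transformation lying in the identity component of the discrete deck group $\mathbb Z_p$, hence trivially, so that the action descends to $SO(3)$. Once the lift is secured as a free $SO(3)$-action, the contradiction with (4.2) is immediate.
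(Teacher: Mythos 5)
Your proposal is correct and follows the same overall strategy as the paper: lift the hypothetical free $SO(3)$-action to the universal cover $\mathbb S^{2n-1}$ of $L^{2n-1}(p,q)$, note that the lifted action is still free, and contradict the fact (4.2) that $SO(3)$ admits no free action on a sphere. Where you genuinely diverge is in how the lift is justified, and your caution here is well placed: the paper invokes the lifting result 4.4, whose hypothesis $\chi(M)\neq 0$ fails for the closed odd-dimensional manifold $L^{2n-1}(p,q)$, so as written the paper's appeal to 4.4 does not literally apply. Your replacement argument is the standard covering-space obstruction: the action map $G\times\widetilde X\to X$ lifts to the universal cover precisely when the orbit map kills $\pi_1$, and since $\pi_1(SO(3))\cong\mathbb Z_2$ while $\pi_1(X)\cong\mathbb Z_p$ with $p$ odd, the relevant homomorphism $\mathbb Z_2\to\mathbb Z_p$ is forced to vanish; uniqueness of lifts then upgrades the lifted map to an action, and freeness descends as you check. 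Your alternative phrasing --- lift the $\mathbb S^3=\widetilde{SO(3)}$-action (always possible) and observe that $-1$ must act through a homomorphism $\mathbb Z_2\to\mathbb Z_p=\mathrm{Deck}(\mathbb S^{2n-1}/X)$, hence trivially --- is equally valid. Both of your versions make essential and visible use of the oddness of $p$, which any correct proof must do, since $SO(3)$ does act freely on $\mathbb{RP}^{4n-1}=L^{4n-1}(2;1,\dots,1)$ as the paper's own Example shows. In short, your route reaches the same contradiction but rests the lifting step on a hypothesis that actually holds, which is a genuine improvement over the argument as printed.
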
 In fact by similar arguments we get that the group $SO(3)$ cannot act freely on a connected manifold which is homotopy equivalent to some compact polyhedra with Euler characteristic non-zero and a universal cover which is a finite (dimensional) CW-complex homotopic to a sphere. Fake lens spaces fall under this wide class of spaces, on which $SO(3)$ cannot act freely. A \textit {fake lens space} is the orbit space of a free action of finite cyclic group, $\mathbb Z_p$, $p\neq 2$, on an odd dimensional sphere. A fake lens space is a manifold homotopy equivalent to a lens space. To obtain fake lens spaces which are not homeomorphic to classical lens space more sophisticated techniques such as surgery theoretic methods are needed. One may see \cite{Wall} and \cite{Macko} for instance.

\bibliographystyle{amsplain}

\end{document}